\newtheorem{theorem}{Theorem} 
\newtheorem{lemma}[theorem]{Lemma}
\newcommand{\e}{\epsilon}
\newcommand{\F}{\mathbb{F}}
\newcommand{\N}{\mathbb{N}}
\newcommand{\R}{\mathbb{R}}
\newcommand{\ip}[1]{\langle#1\rangle}
\newcommand{\abs}[1]{\lvert#1\rvert}
\newcommand{\bigabs}[1]{\big\lvert#1\big\rvert}
\newcommand{\Biggabs}[1]{\Bigg\lvert#1\Bigg\rvert}
\newcommand{\sums}[1]{\sum_{\substack{#1}}}
\DeclareMathOperator{\E}{E}
\DeclareMathOperator{\RM}{RM}
\DeclareMathOperator{\wt}{wt}
\DeclareMathOperator{\B}{\mathfrak{B}}
\begin{document}

\title[Nonlinearity measures of random Boolean functions]{Nonlinearity measures of random\\Boolean functions}

\author{Kai-Uwe Schmidt}

\date{14 August 2013}

\subjclass[2010]{Primary: 06E30, 60B10; Secondary: 11T71}
 


\address{Faculty of Mathematics, Otto-von-Guericke University, Universit\"atsplatz~2, 39106 Magdeburg, Germany.}

\email{{\tt kaiuwe.schmidt@ovgu.de}}

\begin{abstract}
The $r$-th order nonlinearity of a Boolean function is the minimum number of elements that have to be changed in its truth table to arrive at a Boolean function of degree at most $r$. It is shown that the (suitably normalised) $r$-th order nonlinearity of a random Boolean function converges strongly for all $r\ge 1$. This extends results by Rodier for $r=1$ and by Dib for $r=2$. The methods in the present paper are mostly of elementary combinatorial nature and also lead to simpler proofs in the cases that $r=1$ or $2$.
\end{abstract}

\maketitle


\section{Introduction and Results}

Let $\F_2$ be a field with two elements. A \emph{Boolean function} $f$ is a mapping from $\F_2^n$ to $\F_2$ and its \emph{truth table} is the list of values $f(x)$ as $x$ ranges over $\F_2^n$ in some fixed order. Let $\B_n$ be the space of Boolean functions on $\F_2^n$. Every $f\in\B_n$ can be written uniquely in the form
\[
f(x_1,\dots,x_n)=\sum_{k_1,\dots,k_n\in\{0,1\}}a_{k_1,\dots,k_n}\,x_1^{k_1}\cdots x_n^{k_n},
\]
where $a_{k_1,\dots,k_n}\in\F_2$. The \emph{degree} of~$f$ is defined to be the algebraic degree of this polynomial.
\par
The \emph{$r$-th order nonlinearity} $N_r(f)$ of a Boolean function $f$ is the minimum number of elements that have to be changed in its truth table to arrive at the truth table of a Boolean function of degree at most $r$. We state this definition more formally as follows. Let $\RM(r,n)$ be the set of Boolean functions in $\B_n$ of degree at most $r$ (which is known as the \emph{Reed-Muller code} of length~$2^n$ and order $r$; see \cite[Chapters~13--15]{MacSlo1977}, for example) and define the \emph{Hamming distance} between $f,g\in\B_n$ to be 
\[
d(f,g)=\bigabs{\{x\in\F_2^n:f(x)\ne g(x)\}}.
\]
Then the $r$-th order nonlinearity of $f$ is
\[
N_r(f)=\min_{g\in\RM(r,n)} d(f,g).
\]
The nonlinearity of Boolean functions is of significant relevance in cryptography since it measures the resistance of a Boolean function against low-degree approximation attacks (see~\cite{KnuRob1996}, for example, and~\cite{Car2010} for more background on the role of Boolean functions in cryptography and error-correcting codes).
\par
Our interest is the distribution of the nonlinearity of Boolean functions. To this end, let $\Omega$ be the set of infinite sequences of elements from $\F_2$ and let $\B$ be the space of functions from $\Omega$ to $\F_2$. For $f\in\B$, we denote the restriction of $f$ to its first $n$ coordinates by $f_n$, which is in $\B_n$. We endow $\B$ with a probability measure defined by
\begin{equation}
\Pr\big[f\in\B:f_n=g\big]=2^{{-2^n}}\quad\text{for all $g\in\B_n$ and all $n\in\N$}.   \label{eqn:def_prob_measure}
\end{equation}
A basic probabilistic method can be used to show that, if $f$ is drawn from~$\B$, equipped with the probability measure defined by~\eqref{eqn:def_prob_measure}, then 
\begin{equation}
\limsup_{n\to\infty}\frac{2^{n-1}-N_r(f_n)}{\sqrt{2^{n-1}\tbinom{n}{r}\log 2}}\le1\quad\text{almost surely}.   \label{eqn:upper_bound}
\end{equation}
This was proved with a weaker convergence mode by Carlet~\cite[Theorem~1]{Car2006}. The aim of this paper is to prove strong convergence of the normalised $r$-th order nonlinearity, which shows that the bound~\eqref{eqn:upper_bound} is best possible.
\begin{theorem}
\label{thm:main}
Let $f$ be drawn at random from $\B$, equipped with the probability measure defined by~\eqref{eqn:def_prob_measure}. Then for all $r\ge1$, as $n\to\infty$,
\begin{equation}
\frac{2^{n-1}-N_r(f_n)}{\sqrt{2^{n-1}\binom{n}{r}\log 2}}\to1\quad\text{almost surely}   \label{eqn:main_Pr}
\end{equation}
and
\begin{equation}
\frac{2^{n-1}-\E[N_r(f_n)]}{\sqrt{2^{n-1}\binom{n}{r}\log 2}}\to1.   \label{eqn:main_E}
\end{equation}
\end{theorem}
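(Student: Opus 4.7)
Set $B_n:=\sqrt{2^{n-1}\binom{n}{r}\log 2}$. Since the upper bound $\limsup_n(2^{n-1}-N_r(f_n))/B_n\le 1$ is \eqref{eqn:upper_bound}, the task is to establish the matching lower bound almost surely (and then deduce \eqref{eqn:main_E}). My approach is the second-moment method combined with Borel--Cantelli: fix $\e\in(0,1)$, set $t_n:=(1-\e)B_n$, and let
\[
Z_n:=\bigabs{\bigl\{g\in\RM(r,n):d(f_n,g)\le 2^{n-1}-t_n\bigr\}}.
\]
Since $\{2^{n-1}-N_r(f_n)<t_n\}=\{Z_n=0\}$, it suffices to show $\sum_n\Pr[Z_n=0]<\infty$, which I will derive from the Paley--Zygmund bound $\Pr[Z_n=0]\le\operatorname{Var}(Z_n)/\E[Z_n]^2$.

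For each fixed $g$, the distance $d(f_n,g)$ is $\operatorname{Bin}(2^n,1/2)$-distributed, so a standard moderate deviation estimate yields $\Pr[d(f_n,g)\le 2^{n-1}-t_n]=2^{-(1-\e)^2\binom{n}{r}(1+o(1))}$; combined with $\abs{\RM(r,n)}=2^{k_n}$ where $k_n=\sum_{j=0}^r\binom{n}{j}=(1+o(1))\binom{n}{r}$, this gives $\E[Z_n]=2^{(2\e-\e^2+o(1))\binom{n}{r}}$. For the second moment, linearity of $\RM(r,n)$ forces $\Pr[d(f_n,g),d(f_n,h)\le 2^{n-1}-t_n]$ to depend only on $w:=\wt(g+h)$. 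Writing $Y_g:=2^{n-1}-d(f_n,g)$, one finds that $Y_g+Y_h$ and $Y_g-Y_h$ decompose as independent simple random walks of lengths $2^n-w$ and $w$ supported on the zero-set and the support of $g+h$, and hence
\[
\Pr[Y_g\ge t_n,\,Y_h\ge t_n]\le\Pr[Y_g+Y_h\ge 2t_n]\le\exp\!\bigl(-2t_n^2/(2^n-w)\bigr).
\]
Summing against the weight enumerator $A_w$ of $\RM(r,n)$, one obtains (up to a harmless polynomial factor from the local CLT lower bound on $\Pr[Y_g\ge t_n]^2$)
\[
\frac{\E[Z_n^2]}{\E[Z_n]^2}\le\abs{\RM(r,n)}^{-1}\sum_{w=0}^{2^n}A_w\exp\!\biggl(\frac{2t_n^2\,(2^n-2w)}{2^n(2^n-w)}\biggr).
\]

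The main obstacle is controlling this last sum, since the exponential correction factor grows large for small $w$ and the exact weight distribution of $\RM(r,n)$ is unknown for $r\ge 2$. The plan is to split at $w=(\tfrac12-\delta)\,2^n$ for some small $\delta=\delta(\e)>0$: the bulk $w\ge(\tfrac12-\delta)2^n$ contributes at most $(1+O(\delta))\abs{\RM(r,n)}$ (using $\sum_wA_w=2^{k_n}$), matching $\E[Z_n]^2$ after the division; while the low-weight contribution is controlled using elementary bounds on $A_w$ for small $w$, such as the Kasami--Tokura classification of the codewords of weight below $2^{n-r+1}$ or coarser Plotkin-type/restriction estimates for intermediate weights, exploiting that in this regime the correction factor is only $2^{O(\delta\binom{n}{r})}$, much smaller than the gap $2^{k_n-O(\delta\binom{n}{r})}$ one can afford. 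Finally, \eqref{eqn:main_E} follows from \eqref{eqn:main_Pr} together with uniform integrability: $N_r(\cdot)$ is $1$-Lipschitz in the $2^n$ coordinates of $f_n$, so Azuma--Hoeffding gives $\operatorname{Var}(N_r(f_n))=O(2^n)=o(B_n^2)$, which is ample to upgrade almost-sure convergence to $L^1$ convergence.
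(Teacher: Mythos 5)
Your overall architecture (a second-moment computation over the code, powered by weight-distribution bounds for Reed--Muller codes) is genuinely in the spirit of the paper's proof, but the sketch has two concrete gaps, and the first is a wrong estimate. Your claim that the bulk $w\ge(\tfrac12-\delta)2^n$ contributes at most $(1+O(\delta))\abs{\RM(r,n)}$ using only $\sum_w A_w=2^{k_n}$ is false: at $w=(\tfrac12-\delta)2^n$ the correction factor in your own display equals
\[
\exp\biggl(\frac{2t_n^2(2^n-2w)}{2^n(2^n-w)}\biggr)=2^{\frac{2\delta}{1/2+\delta}(1-\e)^2\binom{n}{r}}=2^{\Theta(\delta\binom{n}{r})},
\]
not $1+O(\delta)$, so with the trivial bound $\sum_w A_w=2^{k_n}$ the second-moment ratio is only bounded by $2^{\Theta(\delta\binom{n}{r})}$ and Chebyshev gives nothing. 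The exponential factor is $O(1)$ only at bias scale $\abs{2^n-2w}\lesssim 2^n/\binom{n}{r}$, so you would need weight concentration at that much finer scale (e.g.\ dyadic shells, the identity $\sum_{g\in\RM(r,n)}(2^n-2\wt(g))^2=2^n\abs{\RM(r,n)}$ coming from dual distance at least $3$ for moderate biases, and Lemma~\ref{lem:wd_RM} for large ones) --- none of which is in your sketch. This is precisely what the paper sidesteps: Lemma~\ref{lem:frac_RM} prunes to a subset $S$ of size $2^{(1-\alpha)\binom{n}{r}}$ in which \emph{all} pairwise biases are at most $2^n/\binom{n}{r}$, so the pair bound (Lemma~\ref{lem:Pr_ub}) carries only an $O(1)$ correction. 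Relatedly, your low-weight toolkit is too weak for general $r$: Kasami--Tokura classifies only weights below $2.5\cdot2^{n-r}$, and there really are $2^{\Theta(n^{r-1})}$ codewords of weight at most $(\tfrac12-\delta)2^n$ (e.g.\ multiples of a coordinate), so Plotkin-type estimates cannot reach the quality $(1/\delta)^{O(n^{r-1})}$ of Kaufman--Lovett--Porat, which is the essential imported ingredient for $r\ge3$.

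The second gap is the Borel--Cantelli step. Even granting a corrected bulk estimate of the form $1+O(\delta)+o(1)$, Chebyshev yields only $\Pr[Z_n=0]\le O(\delta)$ for each fixed $\delta$, hence $\Pr[Z_n=0]\to0$ --- convergence in probability, exactly the weaker mode of Carlet and Dib that the theorem is meant to improve --- and your asserted summability $\sum_n\Pr[Z_n=0]<\infty$ does not follow (nor is there monotonicity in $n$ to rescue it). The paper resolves this differently and more cheaply: via Bonferroni on $S$ it proves only the very weak bound $\Pr[Y_n\ge\lambda_n]\ge4^{-\alpha\binom{n}{r}}$, and then bootstraps through McDiarmid's concentration of $Y_n$ about its mean (inequality~\eqref{eqn:concentration}, following Alon--Litsyn--Shpunt): if $\E[Y_n]<(1-\delta)\lambda_n$ infinitely often, concentration would force $\Pr[Y_n\ge\lambda_n]$ below $4^{-\alpha\binom{n}{r}}$ for $\alpha=\delta^2/4$, a contradiction; this gives $\E[Y_n]/\lambda_n\to1$, i.e.~\eqref{eqn:main_E}, and then \eqref{eqn:main_Pr} follows by Borel--Cantelli applied to the deviations $\abs{Y_n-\E[Y_n]}$, whose tail bounds \emph{are} summable. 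Alternatively you could salvage your route by pushing the variance ratio to a quantitatively summable $o(1)$, but that again requires the finer bulk control described above. Your final step deducing \eqref{eqn:main_E} from \eqref{eqn:main_Pr} via the Lipschitz property, Azuma--Hoeffding, and uniform integrability is correct, and is essentially a reordering of the paper's use of \eqref{eqn:concentration}.
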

\par
Using Fourier analytic methods due to Hal\'asz~\cite{Hal1973}, Rodier~\cite{Rod2006} proved~\eqref{eqn:main_Pr} for $r=1$. More precise estimates on the rate of convergence in this case were given by Litsyn and Shpunt~\cite{LitShp2008}, using different methods. Dib~\cite{Dib2010} used a more combinatorial approach to prove~\eqref{eqn:main_Pr} with a weaker convergence mode for $r=2$. The methods in this paper are mostly of elementary combinatorial nature and also lead to simpler proofs of~\eqref{eqn:main_Pr} in the cases that $r=1$ or $2$.
\par
With the notation as in Theorem~\ref{thm:main}, write $Y_{n,g}=2^n-2d(f_n,g)$ for $g\in\B_n$. In Section~\ref{sec:RM}, we show that most pairs of functions in $\RM(r,n)$~have Hamming distance close to~$2^{n-1}$. Combining this with some large deviation estimates in Section~\ref{sec:LD} then shows that the events
\[
Y_{n,g}\ge\sqrt{2^{n+1}\tbinom{n}{r}\log 2}
\]
are pairwise nearly independent for all $g$ from a large subset of $\RM(r,n)$. This will be the key ingredient for the proof of Theorem~\ref{thm:main}, which will be completed in Section~\ref{sec:proof}.


\section{Some results on Reed-Muller codes}
\label{sec:RM}

In this section, we show that most pairs of functions in $\RM(r,n)$ have Hamming distance close to $2^{n-1}$.
\par
The \emph{weight} of a Boolean function $f$, denoted by $\wt(f)$, is defined to be its Hamming distance to the zero function. For real $x$, write 
\[
A_{r,n}(x)=\bigabs{\{g\in\RM(r,n):\wt(g)\le2^nx\}}.
\]
Our starting point is the following asymptotic characterisation of $A_{r,n}(x)$, which is a special case of a result due to Kaufman, Lovett, and Porat~\cite{Kaufman2012}. 
\begin{lemma}[{\cite[Theorem~3.1]{Kaufman2012}}]
\label{lem:wd_RM}
For all $r\ge 1$, there exists a constant $K_r$ such that
\[
A_{r,n}\bigg(\frac{1-\delta}{2}\bigg)\le \bigg(\frac{1}{\delta}\bigg)^{K_rn^{r-1}}
\]
for all real $\delta$ satisfying $0<\delta\le 1/2$.
\end{lemma}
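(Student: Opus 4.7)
The plan is to reduce the weight bound to a bias bound and then argue by induction on $r$. Writing $\beta(g) := 2^{-n} \sum_{x \in \F_2^n} (-1)^{g(x)}$ for the normalised bias of $g \in \B_n$, one has $\wt(g) = (1-\beta(g)) 2^{n-1}$, so the condition $\wt(g) \le (1-\delta) 2^{n-1}$ is equivalent to $\beta(g) \ge \delta$. Letting $M_r(n,\delta)$ denote the number of $g \in \RM(r,n)$ with $\beta(g) \ge \delta$, it suffices to prove $M_r(n,\delta) \le (1/\delta)^{K_r n^{r-1}}$. The base case $r=1$ is immediate, since every affine Boolean function is either constant ($\beta = \pm 1$) or perfectly balanced ($\beta = 0$), so $M_1(n,\delta) \le 1$.

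For the inductive step I would exploit the derivative identity
\[
\beta(g)^2 = 2^{-n} \sum_{y \in \F_2^n} \beta(D_y g), \qquad D_y g(x) := g(x+y) + g(x),
\]
which follows from the substitution $x' = x+y$ in $(\sum_x (-1)^{g(x)})^2$. Since $D_y g \in \RM(r-1,n)$ whenever $g \in \RM(r,n)$, the hypothesis $\beta(g) \ge \delta$ combined with a Markov argument shows that the set $S_g := \{y : \beta(D_y g) \ge \delta^2/2\}$ has density at least $\delta^2/2$ in $\F_2^n$. I would then encode each biased $g$ by recording a basis $y_1,\dots,y_n$ of $\F_2^n$ drawn from $S_g$, the derivatives $D_{y_i} g$, and the constant $g(0)$; given these data, one reconstructs $g$ by writing any $x$ as a sum of basis elements and unfolding the chain of derivatives. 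This yields the recurrence
\[
M_r(n,\delta) \le 2 \cdot 2^{n^2} \cdot M_{r-1}(n, \delta^2/2)^n,
\]
whose iteration produces a bound of the form $(1/\delta)^{O(n^{r-1})}$, provided the $2^{O(n^2)}$ overhead can be controlled.

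The main obstacle is precisely this overhead: the basis enumeration contributes a factor of $2^{O(n^2)}$, which swamps the target bound as soon as $r = 2$. A secondary technical point is that $S_g$ need not span $\F_2^n$ (its density is below $1/2$), so choosing a basis from within $S_g$ requires additional justification. For $r = 2$ I would bypass these issues by invoking the classical correspondence between biased quadratic Boolean functions and symmetric bilinear forms of small rank: $|\beta(g)| \ge \delta$ forces the bilinear part of $g$ to have rank at most $2\log_2(1/\delta)$, and the standard count of bilinear forms of bounded rank over $\F_2$ directly yields the desired $(1/\delta)^{O(n)}$ bound. For general $r$ the crux of the Kaufman--Lovett--Porat argument is a more delicate iterated encoding that exploits compatibility relations among the $D_{y_i}g$, so that each additional derivative contributes only $O(n^{r-2}\log(1/\delta))$ bits rather than the full $O(n^{r-1}\log(1/\delta))$; faithfully reproducing this refinement is where I would expect the real work to lie.
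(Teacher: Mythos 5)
Two things should be said up front. First, the paper does not prove this lemma at all: it is imported verbatim as \cite[Theorem~3.1]{Kaufman2012}, and the only argument the paper supplies is the remark that the case $r=1$ holds trivially because all but two functions in $\RM(1,n)$ have weight exactly $2^{n-1}$. Your base case reproduces exactly that remark, and your $r=2$ argument is also essentially correct and classical: if the alternating bilinear form associated with a quadratic $g$ has rank $2h$, then the normalised bias satisfies $\abs{\beta(g)}\in\{0,2^{-h}\}$, so $\abs{\beta(g)}\ge\delta$ forces $h\le\log_2(1/\delta)$; the number of alternating forms of rank $2h$ on $\F_2^n$ is at most $2^{2nh}$ and each admits at most $2^{2h+1}$ biased affine completions, which gives $(1/\delta)^{O(n)}$ as required (the polynomial factors in $n$ are absorbed since $\log_2(1/\delta)\ge 1$ for $\delta\le 1/2$). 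So for $r\le 2$ your sketch is a genuine, self-contained alternative to citing the literature, by a route (rank of quadratic forms) the paper does not take.

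For $r\ge 3$, however, the gap you identify yourself is fatal as the argument stands, not merely technical. In the recurrence $M_r(n,\delta)\le 2\cdot 2^{n^2}\,M_{r-1}(n,\delta^2/2)^n$, the derivative data is essentially free --- since $M_1\le 1$, unfolding the recursion shows the entire count comes from the $2^{n^2}$ basis overhead, incurred once per node of a recursion tree with $1+n+\cdots+n^{r-2}$ nodes, yielding $2^{\Theta(n^r)}$ against a target of $2^{O(n^{r-1}\log_2(1/\delta))}$; at $\delta$ near $1/2$ the target is $2^{O(n^{r-1})}$, so the exponent is off by a full factor of $n$ with no slack to absorb it. The missing idea --- which is the actual content of Kaufman--Lovett--Porat --- is the refined joint encoding you allude to but do not carry out, in which the derivatives $D_{y_i}g$ are not recorded independently with fresh bookkeeping per level but are pinned down by mutual consistency constraints, so that each additional derivative costs only $O(n^{r-2}\log(1/\delta))$ bits. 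The secondary issue you flag is also real: $S_g$ has density only $\delta^2/2<1/2$ and may a priori lie in a proper subspace, so drawing a basis of $\F_2^n$ from $S_g$ needs a separate repair (e.g.\ restricting to the span and treating complementary directions by another device). In sum: your proposal correctly handles the trivial case the paper mentions, correctly proves the first nontrivial case by a different route, but for $r\ge 3$ it is a roadmap toward the cited theorem rather than a proof of it.
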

\par
It should be noted that the case $r=1$ is not covered in~\cite[Theorem~3.1]{Kaufman2012}. Lemma~\ref{lem:wd_RM} however holds trivially in this case, since all but two functions in $\RM(1,n)$ have weight $2^{n-1}$.
\par
We now apply Lemma~\ref{lem:wd_RM} to prove the main result of this section.
\begin{lemma}
\label{lem:frac_RM}
Let $\alpha>0$ be real and let $r\ge 1$ be integral. Then, for all sufficiently large $n$, there exists a subset $S\subset\RM(r,n)$ of cardinality at least $2^{(1-\alpha)\binom{n}{r}}$ such that
\begin{equation}
\bigabs{d(g,h)-2^{n-1}}\le 2^{n-1}/\tbinom{n}{r}\quad\text{for all $g,h\in S$ with $g\ne h$.}   \label{eqn:subset}
\end{equation}
\end{lemma}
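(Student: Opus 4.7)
The plan is to exploit the linearity of $\RM(r,n)$: since $d(g,h)=\wt(g+h)$, requiring $\abs{d(g,h)-2^{n-1}}\le 2^{n-1}/\binom{n}{r}$ for all distinct $g,h\in S$ is the same as requiring that every nonzero pairwise sum $g+h$ lies outside the ``bad'' set
\[
T=\bigl\{f\in\RM(r,n):\bigabs{\wt(f)-2^{n-1}}>2^{n-1}/\tbinom{n}{r}\bigr\}.
\]
So I would recast the problem as finding a large independent set in the Cayley graph on $\RM(r,n)$ with connection set $T\setminus\{0\}$.

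The first step is to bound $|T|$. I would apply Lemma~\ref{lem:wd_RM} with $\delta=1/\binom{n}{r}$ to control the low-weight tail. To cover the high-weight tail by the same bound, I would invoke the symmetry of the weight distribution of $\RM(r,n)$ about $2^{n-1}$: since the all-ones function lies in $\RM(r,n)$ for $r\ge1$, the map $f\mapsto f+1$ is a weight-complementing bijection. This yields
\[
|T|\le 2\tbinom{n}{r}^{K_r n^{r-1}}=2^{O(n^{r-1}\log n)}.
\]

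The second step is a greedy construction of $S$. The Cayley graph above is regular of degree $|T|-1$ (note that $0\in T$), so the usual covering argument --- the translates $g+T$ for $g\in S$ must eventually cover all of $\RM(r,n)$ --- produces an independent set of size at least $|\RM(r,n)|/|T|$.

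The final step is to verify the size bound. Using $|\RM(r,n)|=2^{\sum_{i=0}^{r}\binom{n}{i}}\ge 2^{\binom{n}{r}}$ and the estimate above gives
\[
\frac{|\RM(r,n)|}{|T|}\ge 2^{\binom{n}{r}-O(n^{r-1}\log n)},
\]
which exceeds $2^{(1-\alpha)\binom{n}{r}}$ for all sufficiently large $n$, since $\binom{n}{r}=\Theta(n^{r})$ dominates $n^{r-1}\log n$ for every fixed $r\ge1$. I do not anticipate any real obstacle: the key observation is that linearity together with symmetry of the weight distribution reduces the pairwise distance requirement to a one-sided bound on low-weight codewords, after which everything is driven by the near-tight concentration supplied by Lemma~\ref{lem:wd_RM}. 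The case $r=1$ is easier still, since then $|T|=2$ and the bound $|\RM(1,n)|/|T|=2^{n}$ trivially suffices.
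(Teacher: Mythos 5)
Your proposal is correct and takes essentially the same approach as the paper: both arguments bound the number of codewords of atypical weight by combining Lemma~\ref{lem:wd_RM} (with $\delta=1/\binom{n}{r}$) with the weight-complementing bijection $f\mapsto f+1$, and then select $S$ greedily using linearity via $d(g,h)=\wt(g+h)$. Your Cayley-graph/covering phrasing is just a repackaging of the paper's iterative deletion step, and your final count $\abs{\RM(r,n)}/\abs{T}\ge 2^{\binom{n}{r}-O(n^{r-1}\log n)}$ matches the paper's bound $\abs{S}\ge 2^{1+\binom{n}{1}+\cdots+\binom{n}{r}}/\bigl(1+2^{\alpha\binom{n}{r}}\bigr)$.
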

\begin{proof}
Let $B_{r,n}$ be the number of functions $g$ in $\RM(r,n)$ satisfying
\[
\bigabs{\wt(g)-2^{n-1}}\ge 2^{n-1}/\tbinom{n}{r}.
\]
Since $\RM(r,n)$ contains the nonzero constant function, there is a bijection between the functions in $\RM(r,n)$ of weight $w$ and the functions in $\RM(r,n)$ of weight $2^n-w$. Therefore,
\[
B_{r,n}=2A_{r,n}\bigg(\frac{1-1/\binom{n}{r}}{2}\bigg)
\]
and so by Lemma~\ref{lem:wd_RM},
\[
\log_2\bigg(\frac{B_{r,n}}{2}\bigg)\le K_rn^{r-1}\log_2\binom{n}{r}\le K_r\binom{n}{r}\frac{r^r}{n}\log_2\binom{n}{r},
\]
where $K_r$ is the same constant as in Lemma~\ref{lem:wd_RM}. Therefore,
\begin{equation}
B_{r,n}\le 2^{\alpha\binom{n}{r}}   \label{eqn:B_bound}
\end{equation}
for all sufficiently large $n$.
\par
Next we construct the set $S$ iteratively as follows. We take $n$ large enough, so that the bound~\eqref{eqn:B_bound} for $B_{r,n}$ holds. Choose a $g\in\RM(r,n)$ to be in $S$ and delete all $u\in\RM(r,n)$ satisfying
\[
\bigabs{d(g,u)-2^{n-1}}\ge 2^{n-1}/\tbinom{n}{r}.
\]
From~\eqref{eqn:B_bound} it is readily verified that the number of deleted functions is at most $2^{\alpha\,\binom{n}{r}}$. We can continue in this way to choose functions of $\RM(r,n)$ to be in $S$, while maintaining the property~\eqref{eqn:subset}, as long as the number of chosen functions times $1+2^{\alpha\,\binom{n}{r}}$ is less than the cardinality of $\RM(r,n)$, namely $2^{1+\binom{n}{1}+\cdots+\binom{n}{r}}$. We can therefore obtain a set $S$ satisfying~\eqref{eqn:subset} and
\[
\abs{S}\ge \frac{2^{1+\binom{n}{1}+\cdots+\binom{n}{r}}}{1+2^{\alpha\,\binom{n}{r}}}\ge \frac{2^{\binom{n}{r}}}{2^{\alpha\,\binom{n}{r}}}
\]
for all sufficiently large $n$.
\end{proof}


\section{Some large deviation estimates}
\label{sec:LD}

In this section, we give some estimates for tail probabilities of sums of independent identically distributed random variables. For $\mathbf{a},\mathbf{b}\in\R^m$, we denote their scalar product by $\ip{\mathbf{a},\mathbf{b}}$.
\begin{lemma}
\label{lem:subnormal}
Let $\mathbf{g}$ and $\mathbf{h}$ be elements of $\{-1,1\}^N$ and let $X$ be drawn at random from $\{-1,1\}^N$, equipped with the uniform probability measure. Write $Y_g=\ip{X,\mathbf{g}}$ and $Y_h=\ip{X,\mathbf{h}}$. Then, for all $t_1,t_2\in\R$,
\[
\E\big[\exp(t_1Y_g+t_2Y_h)\big]\le \exp\big(\tfrac{1}{2}N\big(t_1^2+t_2^2\big)+t_1t_2\ip{\mathbf{g},\mathbf{h}}\big).
\]
\end{lemma}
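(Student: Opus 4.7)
The plan is to exploit that $X$ has independent coordinates and reduce the claim to a one-variable inequality applied coordinate by coordinate. Write $X=(X_1,\dots,X_N)$, $\mathbf{g}=(g_1,\dots,g_N)$, $\mathbf{h}=(h_1,\dots,h_N)$, and rearrange the exponent as
\[
t_1Y_g+t_2Y_h=\sum_{i=1}^{N}X_i(t_1g_i+t_2h_i).
\]
Since the $X_i$ are independent and uniform on $\{-1,1\}$, the moment generating function factors as
\[
\E\big[\exp(t_1Y_g+t_2Y_h)\big]=\prod_{i=1}^{N}\E\big[\exp(X_i(t_1g_i+t_2h_i))\big]=\prod_{i=1}^{N}\cosh(t_1g_i+t_2h_i).
\]

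Next I would apply the classical bound $\cosh(x)\le\exp(x^2/2)$, valid for all real $x$ (a routine consequence of comparing Taylor coefficients: $(2k)!\ge 2^k k!$). This gives
\[
\prod_{i=1}^{N}\cosh(t_1g_i+t_2h_i)\le\exp\bigg(\frac{1}{2}\sum_{i=1}^{N}(t_1g_i+t_2h_i)^2\bigg).
\]

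Finally I would simplify the exponent using $g_i,h_i\in\{-1,1\}$, so that $g_i^2=h_i^2=1$ and hence
\[
(t_1g_i+t_2h_i)^2=t_1^2+t_2^2+2t_1t_2g_ih_i.
\]
Summing over $i$ produces $N(t_1^2+t_2^2)+2t_1t_2\ip{\mathbf{g},\mathbf{h}}$, and substituting back yields exactly the claimed bound. No step should be an obstacle here; the argument is a textbook Hoeffding-type computation, and the only content specific to the problem is the cross term $2t_1t_2\ip{\mathbf{g},\mathbf{h}}$ which falls out automatically from the $\pm 1$ structure.
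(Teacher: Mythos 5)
Your proposal is correct and follows essentially the same route as the paper: factor the moment generating function via independence, bound each factor by $\cosh(t_1g_j+t_2h_j)\le\exp\bigl(\tfrac{1}{2}(t_1g_j+t_2h_j)^2\bigr)$, and expand the square using $g_j^2=h_j^2=1$. The only difference is cosmetic: the paper leaves the final expansion of $\sum_j(t_1g_j+t_2h_j)^2$ as an easy exercise, while you carry it out explicitly.
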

\begin{proof}
Write $X=(X_1,\dots,X_N)$, $\mathbf{g}=(g_1,\dots,g_N)$, and $\mathbf{h}=(h_1,\dots,h_N)$. Then
\begin{align*}
\E\big[\exp(t_1Y_g+t_2Y_h)\big]
&=\E\Bigg[\prod_{j=1}^N\exp\big(X_j(t_1g_j+t_2h_j)\big)\Bigg]\\
&=\prod_{j=1}^N\E\big[\exp\big(X_j(t_1g_j+t_2h_j)\big)\big]
\end{align*}
using that the $X_j$'s are independent. Since the $X_j$'s take on each of the values $1$ and $-1$ with probability $1/2$, we see that
\[
\E\big[\exp(t_1Y_g+t_2Y_h)\big]=\prod_{j=1}^N\cosh(t_1g_j+t_2h_j).
\]
By comparing the Maclaurin series of $\cosh(x)$ and $\exp(x^2/2)$, we find that $\cosh(x)\le \exp(x^2/2)$. Thus
\begin{align*}
\E\big[\exp(t_1Y_g+t_2Y_h)\big]&\le\prod_{j=1}^N\exp\big(\tfrac{1}{2}(t_1g_j+t_2h_j)^2\big)\\
&=\exp\Bigg(\frac{1}{2}\sum_{j=1}^N(t_1g_j+t_2h_j)^2\Bigg),
\end{align*}
from which the desired bound easily follows.
\end{proof}
\par
We next apply Lemma~\ref{lem:subnormal} to vectors $\mathbf{g}$ and $\mathbf{h}$ whose scalar product is sufficiently small.
\begin{lemma}
\label{lem:Pr_ub}
Let $r\ge 0$ be an integer and let $\mathbf{g}$ and $\mathbf{h}$ be elements of $\{-1,1\}^{2^n}$ satisfying $\abs{\ip{\mathbf{g},\mathbf{h}}}\le 2^n/\binom{n}{r}$. Let $X$ be drawn at random from $\{-1,1\}^{2^n}$, equipped with the uniform probability measure. Write $Y_g=\ip{X,\mathbf{g}}$ and $Y_h=\ip{X,\mathbf{h}}$. Then
\[
\Pr\Big[Y_g\ge\sqrt{2^{n+1}\tbinom{n}{r}\log 2}\,\cap\, Y_h\ge\sqrt{2^{n+1}\tbinom{n}{r}\log 2}\Big]\le 4/4^{\binom{n}{r}}.
\]
\end{lemma}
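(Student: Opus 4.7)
The plan is to apply the Chernoff--Markov method to the joint event using the bivariate moment generating function bound already established in Lemma~\ref{lem:subnormal}. Write $N=2^n$ and $T=\sqrt{2^{n+1}\binom{n}{r}\log 2}$, and set $m=\binom{n}{r}$ to keep the calculation readable. For any $t\ge 0$, Markov's inequality applied to $\exp(tY_g+tY_h)$ gives
\[
\Pr[Y_g\ge T\,\cap\, Y_h\ge T]\le e^{-2tT}\,\E\big[\exp(tY_g+tY_h)\big].
\]
Lemma~\ref{lem:subnormal} with $t_1=t_2=t$ bounds the right-hand side by $\exp(Nt^2+t^2\ip{\mathbf{g},\mathbf{h}})$. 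Using the hypothesis $\ip{\mathbf{g},\mathbf{h}}\le N/m$ (the upper bound from $\abs{\ip{\mathbf{g},\mathbf{h}}}\le N/m$) yields
\[
\Pr[Y_g\ge T\,\cap\, Y_h\ge T]\le \exp\!\big(-2tT+t^2N(1+1/m)\big).
\]

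Next I would optimise over $t\ge 0$. The quadratic $-2tT+t^2N(1+1/m)$ is minimised at $t^*=T/(N(1+1/m))$, giving a minimum value of $-T^2/\big(N(1+1/m)\big)$. Substituting $T^2=2Nm\log 2$ produces an exponent of
\[
-\frac{2m^2\log 2}{m+1}.
\]

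The last step is the elementary comparison $\frac{m^2}{m+1}=m-1+\frac{1}{m+1}\ge m-1$, which gives $-\frac{2m^2\log 2}{m+1}\le (-2m+2)\log 2=\log 4-2m\log 2$. Exponentiating yields the claimed bound $4/4^m=4/4^{\binom{n}{r}}$. I do not anticipate a serious obstacle: the only mild point of care is that $\ip{\mathbf{g},\mathbf{h}}$ can be negative, so we must use its upper bound rather than the absolute value when selecting the sign of $t_1,t_2$, and once $t_1=t_2\ge 0$ is fixed the $t_1t_2\ip{\mathbf{g},\mathbf{h}}$ term can only hurt us through its positive part. Everything else is an explicit one-parameter optimisation.
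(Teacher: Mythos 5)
Your proposal is correct and follows essentially the same route as the paper: a Chernoff--Markov bound on the joint event using the bivariate moment generating function estimate of Lemma~\ref{lem:subnormal} with $t_1=t_2$, discarding the sign of $\ip{\mathbf{g},\mathbf{h}}$ via the hypothesis. The only cosmetic difference is that you optimise $t$ exactly at $t^*=T/(N(1+1/m))$ and then recover $4/4^{\binom{n}{r}}$ through the inequality $m^2/(m+1)\ge m-1$, whereas the paper simply takes $s=\lambda/2^n$, for which the resulting expression equals $4/4^{\binom{n}{r}}$ on the nose.
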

\begin{proof}
Write
\[
\lambda=\sqrt{2^{n+1}\tbinom{n}{r}\log 2}
\]
and $s=\lambda/2^n$. Application of Markov's inequality gives
\begin{align*}
\Pr\big[Y_g\ge\lambda\cap Y_h\ge\lambda\big]&=\Pr\big[\exp(sY_g)\ge\exp(s\lambda)\cap \exp(sY_h)\ge\exp(s\lambda)\big]\\[1ex]
&\le\frac{\E\big[\exp(sY_g)\exp(sY_h))\big]}{[\exp(s\lambda)]^2}\\[1ex]
&\le\frac{\exp(2^ns^2(1+1/\binom{n}{r}))}{[\exp(s\lambda)]^2}
\end{align*}
by Lemma~\ref{lem:subnormal}. This last expression equals $4/4^{\binom{n}{r}}$, as required.
\end{proof}
\par
We also need the following estimate.
\begin{lemma}
\label{lem:Pr_lb}
Let $X_1,\dots,X_{2^n}$ be independent random variables taking on each of $-1$ and $1$ with probability $1/2$. Then, for all $r\ge 1$ and all sufficiently large $n$,
\[
\Pr\Big[X_1+\cdots+X_{2^n}\ge\sqrt{2^{n+1}\tbinom{n}{r}\log 2}\Big]\ge \frac{1}{3\cdot2^{\binom{n}{r}}\sqrt{\binom{n}{r}}}.
\]
\end{lemma}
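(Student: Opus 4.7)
The plan is to express the tail probability as a sum of binomial coefficients, approximate these via Stirling to obtain a Gaussian tail, and then apply Mill's ratio. Set $N=2^n$, $a=\binom{n}{r}$, and $\lambda=\sqrt{2Na\log 2}$; since $X_1+\cdots+X_N$ equals $2k-N$ with probability $\binom{N}{k}2^{-N}$, the probability in question equals $2^{-N}\sum_{k\ge(N+\lambda)/2}\binom{N}{k}$.

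The first step is a local limit estimate from Stirling's formula, namely
\[
\binom{N}{(N+j)/2}2^{-N}=(1+o(1))\sqrt{\tfrac{2}{\pi N}}\,e^{-j^2/(2N)},
\]
uniformly for $0\le j\le 2\lambda$. The uniformity holds for $n$ sufficiently large because the next-order correction in Stirling contributes an error of order $j^4/N^3=O(a^2/N)=O(n^{2r}/2^n)$, which vanishes as $n\to\infty$. Next, because $e^{-j^2/(2N)}$ is decreasing in $j\ge 0$, a Riemann-sum comparison over the appropriate parity class of $k$ converts the binomial sum into a Gaussian integral, giving
\[
\Pr\bigl[X_1+\cdots+X_N\ge\lambda\bigr]\ge(1+o(1))\bigl(1-\Phi(x)\bigr),\quad x:=\sqrt{2a\log 2},
\]
with $\Phi$ the standard normal distribution function. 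Finally, the Mill's-ratio lower bound $1-\Phi(x)\ge\tfrac{1}{\sqrt{2\pi}}\,\tfrac{x}{x^2+1}\,e^{-x^2/2}$, together with $e^{-x^2/2}=2^{-a}$ and $x^2=2a\log 2$, yields $1-\Phi(x)\ge\frac{1+o(1)}{\sqrt{4\pi a\log 2}}\,2^{-a}$. Since $\sqrt{4\pi\log 2}\approx 2.954<3$, the constant $1/\sqrt{4\pi\log 2}$ exceeds $1/3$, and absorbing the $(1+o(1))$ factors for $n$ large completes the estimate.

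The main obstacle is the sharpness required in the local limit step. Because the Gaussian constant $1/\sqrt{4\pi\log 2}$ exceeds the target $1/3$ by only about two percent, the Stirling-based estimate must have a cumulative multiplicative error of $1+o(1)$, not merely $O(1)$, across the entire range $j\le 2\lambda$. In particular, the square-root prefactor $\sqrt{2/(\pi N)}$ must be preserved exactly, and the quartic Stirling correction must be shown to vanish uniformly in $j$ up to $2\lambda$; a crude Chernoff-style bound would discard the polynomial prefactor $1/\sqrt{a}$ appearing on the right-hand side of the lemma and therefore cannot deliver the required estimate.
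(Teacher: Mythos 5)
Your proof is correct, and in substance it arrives at exactly the same asymptotic as the paper, but you build from scratch what the paper obtains by citation. The paper's entire proof is two lines: it invokes the moderate-deviation normal approximation from Feller (Chapter~VII, (6.7)) to get the precise limit $\Pr\big[X_1+\cdots+X_{2^n}\ge\lambda\big]\sim 2^{-\binom{n}{r}}\big(4\pi\binom{n}{r}\log 2\big)^{-1/2}$ and then, just as you do, finishes with the numerical fact $\sqrt{4\pi\log 2}<3$. Your Stirling local limit estimate, Riemann-sum comparison, and Mill's-ratio bound together constitute a self-contained proof of that cited asymptotic in this special case; in particular, your check that the quartic correction $j^4/N^3=O(a^2/N)=O(n^{2r}/2^n)$ vanishes uniformly is precisely the verification that the threshold $x=\sqrt{2\binom{n}{r}\log 2}=O(n^{r/2})$ lies in Feller's admissible range $x=o(N^{1/6})$, a hypothesis the paper leaves implicit. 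Your closing diagnosis is also on target: since $1/\sqrt{4\pi\log 2}$ exceeds $1/3$ by only about two percent, any Chernoff-style bound that loses the polynomial prefactor $1/\sqrt{\binom{n}{r}}$ would be useless here, which is exactly why both routes must pass through a genuine $(1+o(1))$ normal approximation. What the paper's route buys is brevity; what yours buys is self-containedness and explicit error control, at the modest cost of the parity and truncation bookkeeping (with $N=2^n$ even, the sum runs over even $j$ in steps of $2$, and cutting at $j\le 2\lambda$ loses only a $1-o(1)$ factor of the Gaussian tail), all of which you handle or correctly flag.
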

\begin{proof}
A normal tail approximation of the distribution of $X_1+\cdots+X_{2^n}$ gives (see Feller~\cite[Chapter~VII, (6.7)]{Fel1968}, for example)
\[
\lim_{n\to\infty}2^{\tbinom{n}{r}}\sqrt{4\pi\tbinom{n}{r}\log 2}\,\Pr\Big[X_1+\cdots+X_{2^n}\ge \sqrt{2^{n+1}\tbinom{n}{r}\log 2}\Big]=1,
\]
from which the lemma can be deduced since $\sqrt{4\pi \log 2}<3$. 
\end{proof}


\section{Proof of Theorem~\ref{thm:main}}
\label{sec:proof}

For $g\in\RM(r,n)$, write $Y_{n,g}=2^n-2d(f_n,g)$ and
\[
Y_n=\max_{g\in\RM(r,n)}Y_{n,g},
\]
so that $Y_n=2^n-2N_r(f_n)$. Notice that
\begin{equation}
Y_{n,g}=\sum_{x\in\F_2^n}(-1)^{f_n(x)+g(x)},   \label{eqn:Y_sum_of_rv}
\end{equation}
from which we see that $Y_{n,g}$ is a sum of $2^n$ random variables, each taking each of the values $-1$ and $1$ with probability $1/2$.
\par
We make repeated use of the inequality
\begin{equation}
\Pr\big[\bigabs{Y_n-\E[Y_n]}\ge \theta\big]\le 2\exp\bigg(\!\!-\frac{\theta^2}{2^{n+1}}\bigg)\quad\text{for $\theta\ge0$},   \label{eqn:concentration}
\end{equation}
which follows from well known results on concentration of probability measures (see McDiarmid~\cite[Lemma~1.2]{McD1989}, for example). 
\par
First, we derive an upper bound for $\E[Y_n]$. Letting $s\in\R$, we have by Jensen's inequality,
\begin{align*}
\exp(s\E[Y_n])&\le\E\big[\exp(sY_n)\big]\\[1ex]
&=\E\Big[\max_{g\in\RM(r,n)}\exp(s Y_{n,g})\Big]\\[1ex]
&\le\sum_{g\in\RM(r,n)}\E\big[\exp(s Y_{n,g})\big]\\[1ex]
&\le 2^{1+\binom{n}{1}+\cdots+\binom{n}{r}}\,\exp(2^{n-1}s^2)
\end{align*}
by Lemma~\ref{lem:subnormal} with $t_1=s$ and $t_2=0$ using~\eqref{eqn:Y_sum_of_rv}. Hence
\[
\E[Y_n]\le \frac{1}{s}\big(1+\tbinom{n}{1}+\cdots+\tbinom{n}{r}\big)\,\log 2+2^{n-1}s.
\]
Now choose $s$ such that both summands are equal. This gives
\begin{equation}
\E[Y_n]\le \sqrt{2^{n+1}\big(1+\tbinom{n}{1}+\cdots+\tbinom{n}{r}\big)\log2}.   \label{eqn:E_ub}
\end{equation}
Write
\begin{equation}
\lambda_n=\sqrt{2^{n+1}\tbinom{n}{r}\log 2}   \label{eqn:def_lambda}
\end{equation}
and, for $\delta\in(0,1)$, define the set
\begin{equation}
M(\delta)=\big\{n\in\N:\E[Y_n]<(1-\delta)\lambda_n\big\}.   \label{eqn:def_N_delta}
\end{equation}
We claim that the cardinality of $M(\delta)$ is finite for all choices of $\delta>0$, which together with~\eqref{eqn:E_ub} will prove
\begin{equation}
\lim_{n\to\infty}\E[Y_n]/\lambda_n=1,   \label{eqn:proof_E}
\end{equation}
which in turn proves~\eqref{eqn:main_E}. The proof of the claim is based on an idea in~\cite{AloLitShp2010}. 
\par
Let $\alpha\in(0,1)$ be a real number, to be determined later. By Lemma~\ref{lem:frac_RM}, for all sufficiently large $n$, there exists a subset $S\subset\RM(r,n)$ satisfying
\begin{equation}
2^{(1-\alpha)\binom{n}{r}} \le\abs{S}\le 2\cdot 2^{(1-\alpha)\binom{n}{r}},   \label{eqn:size_S}
\end{equation}
say, such that
\begin{equation}
\Biggabs{\sum_{x\in\F_2^n}(-1)^{g(x)+h(x)}}\le 2^n/\tbinom{n}{r}\quad\text{for all $g,h\in S$ with $g\ne h$.}   \label{eqn:inner_prod_constraint}
\end{equation}
We have
\begin{align*}
\Pr\big[Y_n\ge\lambda_n\big]&\ge\Pr\big[\max_{g\in S}\;Y_{n,g}\ge\lambda_n\big]\\
&\ge \sum_{g\in S}\Pr\big[Y_{n,g}\ge\lambda_n\big]-\frac{1}{2}\sums{g,h\in S\\g\ne h}\Pr\big[Y_{n,g}\ge\lambda_n\,\cap\,Y_{n,h}\ge\lambda_n\big]
\end{align*}
by the Bonferroni inequality. Lemma~\ref{lem:Pr_lb} gives a lower bound for the probabilities in the first sum and, using~\eqref{eqn:Y_sum_of_rv} and~\eqref{eqn:inner_prod_constraint}, Lemma~\ref{lem:Pr_ub} gives an upper bound for the probabilities in the second sum. Applying these bounds gives, for all sufficiently large $n$,
\begin{align*}
\Pr\big[Y_n\ge \lambda_n\big]&\ge \abs{S}\cdot\frac{1}{3\cdot2^{\binom{n}{r}}\sqrt{\binom{n}{r}}}-\frac{\abs{S}^2}{2}\cdot\frac{4}{4^{\binom{n}{r}}}\\
&\ge\frac{1}{3\cdot2^{\alpha\binom{n}{r}}\sqrt{\binom{n}{r}}}-\frac{8}{4^{\alpha\binom{n}{r}}},
\end{align*}
using~\eqref{eqn:size_S}. The first term dominates the second term, so that, for all sufficiently large $n$,
\begin{equation}
\Pr\big[Y_n\ge \lambda_n\big]\ge \frac{1}{4^{\alpha\binom{n}{r}}},   \label{eqn:lb}
\end{equation}
say. By the definition~\eqref{eqn:def_N_delta} of $M(\delta)$, we have $\lambda_n>\E[Y_n]$ for all $n\in M(\delta)$. We therefore find from~\eqref{eqn:concentration} with $\theta=\lambda_n-\E[Y_n]$ that, for all $n\in M(\delta)$,
\[
\Pr\big[Y_n\ge \lambda_n\big]\le 2\exp\bigg(\!-\frac{(\lambda_n-\E[Y_n])^2}{2^{n+1}}\bigg).
\]
Comparison with~\eqref{eqn:lb} gives, for all sufficiently large $n\in M(\delta)$,
\[
\frac{1}{4^{\alpha\binom{n}{r}}}\le 2\exp\bigg(\!-\frac{(\lambda_n-\E[Y_n])^2}{2^{n+1}}\bigg),
\]
which, after rearranging and using~\eqref{eqn:def_lambda}, implies
\[
\E[Y_n]/\lambda_n\ge 1-\sqrt{1/\tbinom{n}{r}+2\alpha},
\]
By taking $\alpha=\delta^2/4$, say, we see from the definition~\eqref{eqn:def_N_delta} of $M(\delta)$ that $M(\delta)$ has finite cardinality for all $\delta\in(0,1)$, which proves~\eqref{eqn:proof_E}, and so proves~\eqref{eqn:main_E}.  
\par
To prove~\eqref{eqn:main_Pr}, we let $\e>0$ and invoke the triangle inequality to obtain
\[
\Pr\big[\abs{Y_n/\lambda_n-1}>\e\big]\le \Pr\big[\abs{Y_n-\E[Y_n]}/\lambda_n>\tfrac{1}{2}\e\big]+\Pr\big[\abs{\E[Y_n]/\lambda_n-1}>\tfrac{1}{2}\e\big].
\]
By~\eqref{eqn:proof_E}, the second probability on the right hand side equals zero for all sufficiently large $n$, and by~\eqref{eqn:concentration}, the first probability on the right hand side is at most $2\cdot 2^{-(\e^2/4)\,\binom{n}{r}}$. Hence,
\[
\sum_{n=1}^{\infty}\Pr\big[\abs{Y_n/\lambda_n-1}>\e\big]<\infty,
\]
from which and the Borel-Cantelli Lemma we conclude that 
\[
\lim_{n\to\infty} Y_n/\lambda_n=1\quad\text{almost surely}.
\]
This proves~\eqref{eqn:main_Pr}. \qed


\section*{Acknowledgement}

I thank Claude Carlet for some careful comments on a draft of this paper.


\providecommand{\bysame}{\leavevmode\hbox to3em{\hrulefill}\thinspace}
\providecommand{\MR}{\relax\ifhmode\unskip\space\fi MR }
\providecommand{\MRhref}[2]{%
  \href{http://www.ams.org/mathscinet-getitem?mr=#1}{#2}
}
\providecommand{\href}[2]{#2}

%
\end{document}